\newtheorem{preproof}{{\bf \indent Proof.}}
\newenvironment{proof}[1]{\begin{preproof}{\rm
               #1}\hfill{$\Box$}}{\end{preproof}}
\newtheorem{cor}{\bf\indent Corollary}[section]
\newtheorem{example}{\bf\indent Example}[section]
\newtheorem{thm}{{\bf\indent Theorem}}[section]
\newtheorem{lem}{\bf\indent Lemma}[section]
\title{\bf \large  Computing the strong metric dimension for \\co-maximal ideal graphs of commutative rings\thanks
{{\it Key Words}: Strong metric dimension, Strong resolving set, Co-maximal ideal graph.\newline
{\indent{~~2010 {\it Mathematics Subject Classification}: 13A99; 05C78; 05C12.}}}}
\author{{\normalsize  {\sc R. Shariyari${}^{\mathsf{a}}$,   R.  Nikandish${}^{\mathsf{b,c}}$\thanks{Corresponding author}, A. Tehranian${}^{\mathsf{a}}$, H. Rasouli${}^{\mathsf{a}}$}
}\vspace{3mm}\\
{\footnotesize{${}^{\mathsf{a}}$\it Department of Mathematics, Science and Research
Branch,}}\\
{\footnotesize{${}^{\mathsf{}}$\it Islamic Azad University, Tehran, Iran}}\\
{\footnotesize{${}^{\mathsf{b}}$\it  Department of Mathematics,
Jundi-Shapur University of Technology,  Dezful,
Iran}}\\
{\footnotesize{${}^{\mathsf{c}}$\it  Jundi-Shapur Research Institute,
Jundi-Shapur University of Technology,  Dezful,
Iran}}\\
{\footnotesize{${}^{\mathsf{}}$\it}}\\
{\footnotesize{$\mathsf{phd.sh2017@gmail.com}$\quad\quad
$\mathsf{r.nikandish@ipm.ir}$\quad\quad$\mathsf{tehranian@srbiau.ac.ir}$\quad\quad$\mathsf{hrasouli@srbiau.ac.ir}$}}}
\begin{document}

\maketitle


\begin{abstract}
{\small Let $R$ be a commutative ring with identity.
The co-maximal ideal graph of $R$, denoted by $\Gamma(R)$, is a simple graph whose vertices are proper ideals of $R$ which are not contained in the Jacobson radical of $R$ and two distinct vertices $I, J$ are adjacent  if and only if  $I+J=R$.
In this paper, we use Gallai$^{^,}$s Theorem and the concept of strong resolving graph to compute the strong metric  dimension for co-maximal ideal graphs of commutative rings. Explicit formulae for  the strong metric dimension,  depending on whether the ring is reduced or not,  are established.}
\end{abstract}
\begin{center}\section{Introduction}\end{center}
\par
 The concept of metric dimension which enables an observer to uniquely recognize the current position of  a moving enemy in a network  was initiated by
  Harary and Melter \cite{Harary}. This parameter has found  various applications in the other fields  of sciences (see for instance \cite{Ollermann, [8]}).
  From then on many graph theorists have been attracted
  by computing the metric dimension of graphs (see for example \cite{Bu, Im, Ji, kla}).  In 2004, Seb\"{o} and Tannier (\cite{seb}) introduced a more restricted parameter than metric dimension called strong metric dimension. Computing the strong metric dimension of graphs has appeared in some publications
   (see  \cite{kuz1, kuz2, oller} for more information). Both of the strong background and  wide range of applications
   motivate some of algebraic graph theorists to study metric and strong metric dimensions of graphs associated
   with algebraic structures, see  \cite {Bai, [6], dolz, eb, imran, ma, ma2, nik1, Pirzada1}.  In this paper, we deal with the problem of finding the
   strong metric dimension
   for co-maximal graph associated with a commutative ring.
 \par

Throughout this paper, all rings $R$ are commutative with identity. The sets of all  maximal ideals, ideals with non-zero annihilators,
Jacobson radical and nilpotent elements of $R$
are denoted by $\mathrm{Max}(R)$, $A(R)$, $J(R)$, $Nil(R)$, respectively.
If $T$ is a subset of a ring $R$, then the symbol $T^*$ denotes $T\setminus\{0\}$.
 Moreover, $Ann_R(T)=\{r\in R|\, rT=0\}$. The ring $R$ is called \textit{reduced} if $Nil(R)=\{0\}$.

By $G=(V,E)$, we mean a simple and undirected graph $G$ with the vertex set $V=V(G)$ and edge set $E=E(G)$.
\textit{A connected graph} is a graph in which there exists at least one path between any two vertices. \textit{Distance} between two distinct vertices $x,y$, denoted by $d(x,y)$, is the length of the shortest path
 between $x$ and $y$  and
 diam$(G)=max\{d(x,y)\,|x,y\in V\}$ is called the $\it diameter$ of $G$. In the graph $G$, let $V_0\subseteq V$ and $E_0 \subseteq E$.
  \textit{The induced subgraph by} $V_0$,  denoted by $G[V_0]$, is a subgraph of $G$ whose vertex set and edge set are $V_0$  and $E_0=\{\{u,v\}\in E\, |\,u,v\in V_0\}$, respectively.
 Let $x\in V$. Then the open and closed neighborhood of $x$ are denote by $N(x)$ and  $N[x]$, respectively. A  $\it complete$ graph is a graph in which each pair of vertices are adjacent. We use  $K_n$ to denote a complete graph of order $n$. A set $S$ of vertices of a graph $G$ is a \textit{vertex cover of} $G$ if every edge of $G$ has one end
in $S$. \textit{The vertex cover
number of} $G$, denoted by $\alpha(G)$, is the smallest cardinality of a vertex cover of $G$.
\textit{The independence number} of a graph $G$, denoted by $\beta(G)$, is the largest cardinality of an independent set. For a graph $G$, $S\subseteq V(G)$ is
called a $\it clique$ if the induced subgraph
  on $S$ is complete. The number of vertices in the largest clique of a graph $G$ is called the $\it clique\,\, number$ of $G$ and denoted by $\omega(G)$. For a connected graph $G$, let $S=\{v_1,v_2,\dots,v_k\}$ be an ordered subset of $V(G)$ and $v\in V(G)\setminus S$.
  \textit{The metric representation of $v$ with respect to} $S$ is the $k$-vector $D(v|S)=(d(v,v_1),d(v,v_2),\dots, d(v,v_k))$. For $S\subseteq V(G)$, if
  for every $u,v\in V(G)\setminus S$, $D(u|S)=D(v|S)$ implies that $u=v$, then $S$ is called  a \textit {resolving set for} $G$.
 \textit {The metric basis for} $G$ is a resolving set $S$ of  minimum cardinality and the number of elements  in $S$ is called  the
 \textit {metric dimension of} $G$ and denoted by $dim_M(G)$.
A vertex $w$ of a connected graph $G$ \textit{strongly resolves} two vertices $u, v$ of $G$ if there
exists a shortest path from $u$ to $w$ containing $v$ or a shortest path from $v$ to $w$  containing $u$. A set $S$ of vertices is a \textit{strong
resolving set for} $G$ if every pair of vertices of $G$ is strongly resolved by some vertex of $S$. The smallest cardinality
of a strong resolving set for $G$ is called the \textit{strong metric dimension of} $G$ and denoted by $sdim_M(G)$.
For all undefined notions from graph theory, we refer the reader to \cite{west}.

Let $R$ be a ring. \textit {The co-maximal ideal graph of} $R$, denoted by $\Gamma(R)$, is a graph whose
 vertices are proper ideals of $R$ which are not contained in the Jacobson radical of $R$ and two distinct vertices $I, J$ are adjacent  if and only if  $I+J=R$. The concept of co-maximal ideal graph of a commutative ring was first introduced and studied in  \cite{ye}.
 Since then co-maximal ideal graphs of commutative rings have been studied by several authors, for instance see  \cite {ak, wu, ye2}. It is worth mentioning  that
 co-maximal graphs for lattices in \cite{af}, for groups in \cite{ak1}, for matrix algebras in \cite{mira}, for two generated groups in \cite{mir}   and for non-commutative rings in \cite{shen}
 were investigated. This paper is devoted to study the strong metric dimension of a co-maximal ideal graph and it is organized as follows.
 In Section 2, we completely determine $sdim_M(\Gamma(R))$
  in terms of the number of maximal ideals of $R$, in case $R$ is reduced. In Section 3, we focus on  the strong metric dimension of   $\Gamma(R)$,
  when $R$ is a non-reduced ring.

{\begin{center}{\section{Reduced rings case}}\end{center}}\vspace{-2mm}
In this section, we present   strong metric dimension formula for a co-maximal ideal graph, when $R$  is reduced. We begin with a series of lemmas.
\begin{lem}\label{dimhelpnonred}
Let $G$ be a connected graph and $diam(G)=d<\infty$. Then the following statements hold.

$(1)$ $dim_M(G)$ is finite if and only if  $G$ is finite.

$(2)$ If  $W\subset V(G)$ is a strong resolving set of $G$ and $u,v\in V(G)$ such that $N(u)=N(v)$ or $N[u]=N[v]$, then
$u\in W$ or $v\in W$.

$(3)$ If  $W\subset V(G)$ is a strong resolving set of $G$ and $u,v\in V(G)$ such that $d(u,v)=diam(G)$, then
$u\in W$ or $v\in W$.
\end{lem}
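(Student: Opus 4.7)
The lemma bundles three standard facts used throughout the theory, so my plan is to treat each part separately, with the common technique being ``assume a pair of special vertices both avoid the strong resolving set and derive a contradiction.''

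For part (1), I would use the standard counting argument. If $W=\{w_1,\dots,w_k\}$ is a finite resolving set, then for every $v\in V(G)\setminus W$ the vector $D(v\mid W)\in\{0,1,\dots,d\}^{k}$ takes one of at most $(d+1)^{k}$ possible values, and distinct vertices yield distinct vectors; hence $|V(G)|\leq k+(d+1)^{k}$ is finite. Conversely, if $V(G)$ is finite, then $V(G)\setminus\{v\}$ is trivially a resolving set. I expect this step to be mechanical.

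For part (2), I would assume $u,v\notin W$ and pick $w\in W$ that strongly resolves $u,v$, say via a shortest $u$--$w$ path $P$ containing $v$. Treat the two subcases: if $N(u)=N(v)$, then $u$ and $v$ are non-adjacent, so $d(u,v)\ge 2$, yet $P$ yields $d(u,w)=d(u,v)+d(v,w)\ge d(v,w)+2$; on the other hand, replacing the first neighbour $x$ of $u$ on $P$ (which also lies in $N(v)$) gives a $v$--$w$ walk of length $d(u,w)-1$, so $d(v,w)\le d(u,w)-1$, whence $d(v,w)\le d(v,w)-1$, a contradiction. If $N[u]=N[v]$, then $u,v$ are adjacent; a shortest $u$--$w$ path through $v$ of length $\ge 2$ has the form $u,v,x,\dots,w$ with $x\in N(v)\setminus\{u\}\subseteq N(u)$, so $u,x,\dots,w$ is a shorter $u$--$w$ walk, contradiction; length $1$ forces $v=w\in W$, again a contradiction. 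The symmetric situation (shortest $v$--$w$ path through $u$) is handled the same way.

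For part (3), the argument is the shortest of the three: suppose $u,v\notin W$ and let $w\in W$ strongly resolve them, say via a shortest $u$--$w$ path through $v$. Then $d(u,w)=d(u,v)+d(v,w)=\operatorname{diam}(G)+d(v,w)$, which exceeds $\operatorname{diam}(G)$ unless $v=w$; but $v\notin W$ rules out $v=w$, giving the required contradiction.

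The main obstacle, if any, is the bookkeeping in part (2), where one must be careful to exclude the degenerate possibilities (the shortest path having length $1$, or the replaced neighbour coinciding with $u$ or $v$); none of these is deep, but overlooking a case would leave a gap. The rest is routine, and I would present the three parts in the order given in the statement.
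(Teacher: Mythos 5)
Your part (1) coincides with the paper's proof (the same $(d+1)^{k}$ counting bound for the nontrivial direction; the converse is treated as trivial there too), and the paper simply declares parts (2) and (3) ``obvious,'' so your arguments for those are extra detail. Your part (3) is correct, as is the true-twin subcase ($N[u]=N[v]$) of part (2). However, the false-twin subcase ($N(u)=N(v)$) of part (2) is broken as written. If $P=u,x,\dots,v,\dots,w$ is the shortest $u$--$w$ path through $v$ and $x$ is the first neighbour of $u$ on $P$, then the walk obtained by starting at $v$, stepping to $x$, and following $P$ from $x$ to $w$ has length $1+(d(u,w)-1)=d(u,w)$, not $d(u,w)-1$; so all you get is $d(v,w)\le d(u,w)$. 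Moreover, even the inequality you claim, $d(v,w)\le d(u,w)-1$, together with $d(u,w)\ge d(v,w)+2$, yields no contradiction (both are consistent with $d(u,w)=d(v,w)+2$); the step ``whence $d(v,w)\le d(v,w)-1$'' is a non sequitur.

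The gap is easy to close, in two standard ways. Either apply your shortcut symmetrically: since $N(u)=N(v)$, every $u$--$w$ path can be converted into a $v$--$w$ walk of the same length and vice versa, so $d(u,w)=d(v,w)$ for every $w\notin\{u,v\}$; this is incompatible with $d(u,w)=d(u,v)+d(v,w)\ge d(v,w)+2$ unless $w=v\in W$, a contradiction. Or argue on the other side of $v$: if $v\ne w$, let $z$ be the successor of $v$ on $P$; then $z\in N(v)=N(u)$ gives $d(u,z)=1$, while minimality of $P$ gives $d(u,z)=d(u,v)+1\ge 3$, a contradiction, so again $v=w\in W$. With either repair (and the symmetric case of a shortest $v$--$w$ path through $u$), your part (2) is complete; the rest of the proposal stands.
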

\begin{proof}
{$(1)$ To prove the non-trivial direction,
 assume that $dim_M(G)$ is finite and for some non-negative integer $n$, let $W=\{I_1,I_2,\dots,I_n\}$ be a metric basis for $G$. Since  $diam(G)=d< \infty$, for every $x\in V(G)\setminus W$, there are only $(d+1)^n$ choices  for $D(x|W)$. Thus $|V(G)|\leq (d+1)^n+n$ and hence  $G$ is finite.

$(2)$ and $(3)$ are obvious.
}
\end{proof}

Let $G$ be a graph. It is easily seen that every strong resolving set is also a  resolving set, which leads to $dim_M(G)\leq sdim_M(G)$.
Hence, we have the following immediate corollary.
  \begin{cor}\label{dimhelpnonreds}
Let $R$ be a ring. Then  $sdim_M(\Gamma(R))$ is finite if and only if  $\Gamma(R)$ is finite.
\end{cor}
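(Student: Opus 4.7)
The corollary is advertised as immediate, so the plan is short. The forward direction (finite graph implies finite strong metric dimension) is trivial: if $V(\Gamma(R))$ is finite, then $V(\Gamma(R))$ itself is a strong resolving set, hence $sdim_M(\Gamma(R)) \leq |V(\Gamma(R))| < \infty$.

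For the nontrivial direction, I would chain together the two ingredients the authors flag just before the corollary. First, every strong resolving set is a resolving set, so $dim_M(\Gamma(R)) \leq sdim_M(\Gamma(R))$; hence if the right side is finite, so is the left. Second, to invoke Lemma~\ref{dimhelpnonred}(1), I need $\Gamma(R)$ to be connected with finite diameter. This is exactly the content of a known structural result of Ye and Wu~\cite{ye}: the co-maximal ideal graph $\Gamma(R)$ is connected with $\mathrm{diam}(\Gamma(R)) \leq 3$. Once that hypothesis is in place, Lemma~\ref{dimhelpnonred}(1) gives $|V(\Gamma(R))| < \infty$, and we are done.

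The only subtle point — the single step worth flagging — is the appeal to connectedness and finite diameter of $\Gamma(R)$. The lemma the corollary is built on requires both, and neither is a tautology for an arbitrary co-maximal ideal graph, so the cleanest proof explicitly cites the Ye–Wu diameter bound rather than leaving it implicit. With that citation in hand, nothing else requires any calculation: the two implications collapse to a single line each.
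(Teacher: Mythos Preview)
Your proposal is correct and matches the paper's own reasoning: the corollary is stated as immediate precisely because of the inequality $dim_M(G)\leq sdim_M(G)$ combined with Lemma~\ref{dimhelpnonred}(1), and you have correctly identified that the appeal to Lemma~\ref{dimhelpnonred}(1) rests on the Ye--Wu result that $\Gamma(R)$ is connected with $\mathrm{diam}(\Gamma(R))\leq 3$ (which the paper itself cites a few lines later). There is nothing to add.
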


The following well-known result, due to Gallai, which states a relationship between the  independence number and the vertex cover number of a graph $G$ has a key role in this paper.

\begin{lem}\label{Gallai}
{\rm (Gallai$^{^,}$s Theorem)} For any graph $G$ of order $n$, $\alpha(G)+\beta(G)=n$.
\end{lem}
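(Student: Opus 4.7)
The plan is to prove Gallai's identity by establishing the complementary relationship between vertex covers and independent sets. Specifically, I would first show that a set $S \subseteq V(G)$ is a vertex cover of $G$ if and only if its complement $V(G) \setminus S$ is an independent set in $G$. This equivalence is the only real content of the proof, and it follows directly from unwinding the definitions: if some edge $\{u,v\}$ had both endpoints outside $S$, then that edge would not be covered by $S$, contradicting the vertex cover property; conversely, if $V(G)\setminus S$ is independent, then every edge has at least one endpoint in $S$.

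Once this correspondence is in hand, the two inequalities follow immediately. First, I would let $S$ be a minimum vertex cover with $|S| = \alpha(G)$; then $V(G)\setminus S$ is an independent set of size $n - \alpha(G)$, so $\beta(G) \geq n - \alpha(G)$, which rearranges to $\alpha(G) + \beta(G) \geq n$. Symmetrically, letting $T$ be a maximum independent set with $|T| = \beta(G)$, the complement $V(G)\setminus T$ is a vertex cover of size $n - \beta(G)$, giving $\alpha(G) \leq n - \beta(G)$, i.e.\ $\alpha(G) + \beta(G) \leq n$. Combining the two inequalities yields the claimed equality.

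There is essentially no obstacle here, since this is a classical result whose proof is just a definition-chase. The only thing to be careful about is stating the complement-of-vertex-cover-is-independent claim precisely and using it in both directions. Because the authors have introduced $\alpha(G)$ and $\beta(G)$ in the introduction and the statement of the lemma is standard, I would keep the proof to a few lines and not belabor the bookkeeping.
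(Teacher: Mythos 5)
Your proof is correct: the observation that $S$ is a vertex cover if and only if $V(G)\setminus S$ is independent, applied to a minimum cover and a maximum independent set, is the standard (and essentially the only) proof of Gallai's identity. The paper itself gives no proof, citing the result as well known, so your argument simply supplies the classical definition-chase that the authors omit.
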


A vertex $u$ of $G$ is maximally distant from $v$ (in $G$) if for every $w\in N(u)$, $d(v,w)\leq d(u,v)$. If $u$ is maximally distant from $v$ and $v$ is maximally distant from $u$, then we say that $u$ and $v$ are  mutually  maximally distant. The boundary of $G$ is defined as

$\partial(G)=\{u\in V(G)|\,\, {\rm there \,\,exists}\, v\in V(G) \, {\rm such \,\,that} \,\,u,v\,\, {\rm are \, mutually \,  maximally\, distant} \}$.

We use the notion of strong resolving graph introduced by Oellermann and Peters-Fransen in \cite{oller}.
The strong resolving graph of $G$ is a graph $G_{SR}$ with vertex set $V(G_{SR})=\partial(G)$ where two vertices $u,v$ are adjacent in $G_{SR}$  if and only if $u$ and $v$ are mutually  maximally distant.

It was shown in \cite{oller} that the problem of finding the strong metric dimension of a graph $G$ can be transformed into the problem of computing the vertex cover number of $G_{SR}$.

\begin{lem}\label{Oellermann}
{\rm (\cite[Theorem 2.1]{oller})} For any connected graph $G$, $sdim_M(G)=\alpha(G_{SR})$.
\end{lem}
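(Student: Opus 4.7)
The plan is to establish $sdim_M(G)=\alpha(G_{SR})$ by proving the two inequalities separately, using the fact that a pair of mutually maximally distant vertices can only be strongly resolved by one of the two vertices themselves.

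For the inequality $sdim_M(G)\geq \alpha(G_{SR})$, I would first prove the key auxiliary fact: if $u,v\in V(G)$ are mutually maximally distant and $w\in V(G)\setminus\{u,v\}$, then $w$ cannot strongly resolve $\{u,v\}$. Indeed, if some shortest $u$-to-$w$ path passed through $v$ (the other case being symmetric), then a neighbor $v'$ of $v$ lying on this path would satisfy $d(u,v')=d(u,v)+1>d(u,v)$, contradicting that $u$ is maximally distant from $v$. Consequently, any strong resolving set $S$ of $G$ must contain $u$ or $v$ for every edge $\{u,v\}$ of $G_{SR}$; therefore $S\cap \partial(G)$ is a vertex cover of $G_{SR}$, yielding $|S|\geq \alpha(G_{SR})$.

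For the reverse inequality $sdim_M(G)\leq \alpha(G_{SR})$, I would take a minimum vertex cover $C$ of $G_{SR}$ and prove that $C$ is itself a strong resolving set of $G$. Given an arbitrary pair $u,v\in V(G)$, the aim is to exhibit a pair of mutually maximally distant vertices $u^*,v^*\in\partial(G)$ with the property that $u$ lies on some shortest $v^*$-to-$u^*$ path (or $u=u^*$) and $v$ lies on some shortest $u^*$-to-$v^*$ path (or $v=v^*$). Such a pair is constructed by iteratively extending shortest paths: starting from $u$, one repeatedly moves to a neighbor strictly farther from $v$ until no such neighbor exists, producing a vertex $u^*$ that is maximally distant from $v$ with $u$ on a shortest $v$-to-$u^*$ path; one then repeats the procedure with the roles of $u$ and $v$ swapped (and $v$ replaced by $u^*$) to obtain $v^*$ maximally distant from $u^*$ with $v$ on a shortest $u^*$-to-$v^*$ path. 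Since $\{u^*,v^*\}$ is then an edge of $G_{SR}$, the cover $C$ meets it, and whichever vertex of the pair lies in $C$ strongly resolves $u,v$ by construction.

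The main obstacle is the second half, specifically the verification that the iterative extension can be arranged so that the terminal pair is \emph{mutually} maximally distant (not merely one-sidedly so) while still carrying $u$ and $v$ on the relevant shortest paths. One handles this by noting that a single extension of either endpoint can only strictly increase the distance between the two endpoints, and since $\mathrm{diam}(G)$ is finite the process must terminate, at which point neither endpoint admits a neighbor further from the other; this simultaneously guarantees mutual maximality and, because all extensions are along shortest paths, preserves the passage through $u$ and $v$ needed for the strong resolution.
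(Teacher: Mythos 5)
This lemma is quoted in the paper without proof---it is precisely Theorem 2.1 of Oellermann and Peters-Fransen---so there is no in-paper argument to compare with; your reconstruction is, in substance, the original proof, and it is correct. Two small points. In the lower bound, the neighbour $v'$ you exhibit is a neighbour of $v$, so what it contradicts is that $v$ is maximally distant from $u$ (not that $u$ is maximally distant from $v$); since the pair is mutually maximally distant this is harmless, but the labels should be swapped. In the upper bound, the obstacle you flag is real and your fix is the right one, but it deserves one explicit line: maintain the invariant $d(u^*,v^*)=d(u^*,u)+d(u,v)+d(v,v^*)$, which is preserved under each one-step extension by the triangle inequality; equivalently, use the observation that if $u^*$ is maximally distant from $v$ and $v$ lies on a shortest $u^*$--$v^*$ path, then $u^*$ is maximally distant from $v^*$, because $d(x,v^*)\le d(x,v)+d(v,v^*)\le d(u^*,v)+d(v,v^*)=d(u^*,v^*)$ for every $x\in N(u^*)$. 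With that invariant, the terminal pair is mutually maximally distant and both $u$ and $v$ sit on a common shortest $u^*$--$v^*$ path, so whichever endpoint the vertex cover contains strongly resolves $u,v$, exactly as you claim. Finally, your termination argument uses finiteness of the diameter, so the proof as written covers finite (or bounded-diameter) connected graphs, which is the setting in which the lemma is used in this paper.
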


 The next example illustrates the validity of Lemma \ref{Oellermann}.

\begin{example}
\end{example}

$(1)$  Since $(K_n)_{SR}=K_n$,  $sdim_M(K_n)=n-1$.

$(2)$ Let $R=\mathbb{Z}_2\times \mathbb{Z}_2\times \mathbb{Z}_2$. Suppose that
$X=\{(\mathbb{Z}_2,\mathbb{Z}_2,0),(0,\mathbb{Z}_2,\mathbb{Z}_2),(\mathbb{Z}_2,0,\mathbb{Z}_2)\}$ and $Y=\{(\mathbb{Z}_2,0,0),(0,\mathbb{Z}_2,0),(0,0,\mathbb{Z}_2)\}$.
It is not hard to see that
for any $u\in X$, there is no $v\in V(\Gamma(R))$ such that $u$ and $v$ are  mutually  maximally distant, whereas
each pair of vertices in $Y$ are mutually  maximally distant.  This follows that
$\partial(\Gamma(R))=\{(\mathbb{Z}_2,0,0),(0,\mathbb{Z}_2,0),(0,0,\mathbb{Z}_2)\}$ and $\Gamma(R)_{SR}=K_3$. Since
$\alpha(\Gamma(R)_{SR})=2$, Lemma \ref{Oellermann} follows that  $sdim_M(\Gamma(R))=2$.
 On the other hand,
$W=\{(\mathbb{Z}_2,0,0),(0,\mathbb{Z}_2,0) \}$
is the  minimum strong resolving set, i.e., $sdim_M(\Gamma(R))=2$.

\unitlength=1.5mm
\begin{figure}[!th]
\centering
\begin{picture}(60,40)(20,-20)
\put (28,12){\circle*{1.2}}
\put (26,14){\tiny{$(\mathbb{Z}_2,0,0)$}}
\put (28,6){\line (0,1){6}}
\put (28,6){\circle*{1.2}}
\put (25,7){\tiny{$(0,\mathbb{Z}_2,\mathbb{Z}_2)$}}
\put (22,0){\circle*{1.2}}
\put (18,1){\tiny{$(\mathbb{Z}_2,\mathbb{Z}_2,0)$}}
\put (22,0){\line (1,1){6}}
\put (22,0){\line (1,0){12}}
\put (22,0){\line (1,0){12}}
\put (34,0){\circle*{1.2}}
\put (35,0){\tiny{$(\mathbb{Z}_2,0,\mathbb{Z}_2)$}}
\put (34,0){\line (-1,1){6}}
\put (16,-3){\circle*{1.2}}
\put (14,-6){\tiny{$(0,0,\mathbb{Z}_2)$}}
\put (16,-3){\line (2,1){6}}
\put (40,-3){\circle*{1.2}}
\put (38,-6){\tiny{$(0,\mathbb{Z}_2,0)$}}
\put (40,-3){\line (-2,1){6}}

\put (68,12){\circle*{1.2}}
\put (66,14){\tiny{$(\mathbb{Z}_2,0,0)$}}
\put (28,6){\line (0,1){6}}
\put (28,6){\circle*{1.2}}
\put (22,0){\circle*{1.2}}
\put (22,0){\line (1,1){6}}
\put (22,0){\line (1,0){12}}
\put (22,0){\line (1,0){12}}
\put (34,0){\circle*{1.2}}
\put (34,0){\line (-1,1){6}}
\put (56,-3){\circle*{1.2}}
\put (54,-6){\tiny{$(0,0,\mathbb{Z}_2)$}}
\put (56,-3){\line (1,0){24}}
\put (56,-3){\line (4,5){12}}
\put (80,-3){\circle*{1.2}}
\put (78,-6){\tiny{$(0,\mathbb{Z}_2,0)$}}
\put (80,-3){\line (-4,5){12}}
\put (26,-13){\tiny{\rm $\Gamma(R) $}}
\put (64,-13){\tiny{\rm $\Gamma(R)_{SR} $}}
\end{picture}
\end{figure}

It was proved in \cite[Theorem 2.4]{ye} that diam($\Gamma(R))\leq 3$. Hence we omit the elementary proof of the next lemma.

\begin{lem}\label{lemma2d}

 Let $R\cong F_1\times \cdots\times F_n$, where  $F_i$ is a field for every $1\leq i\leq n$
 and let $I$,  $J$ be two vertices of $\Gamma(R)$. Then the following statements hold.

$(1)$   $d(I,J)_{\Gamma(R)}=2$ if and only if $I\cap J\neq 0$ and $I+J\neq R$.

 $(2)$ $d(I,J)_{\Gamma(R)}=3$ if and only if $I\cap J= 0$ and $I+J\neq R$.

 $(3)$ If $I\subseteq J$ or $J\subseteq I$, or $I+J=R$, then  $I$ and $J$ are not mutually  maximally distant.
\end{lem}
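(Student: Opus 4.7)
The plan is to exploit the elementary structure of ideals in $R\cong F_1\times\cdots\times F_n$: every ideal $I$ has the form $I=\prod_{i\in S_I}F_i\times\prod_{i\notin S_I}0$ for a uniquely determined set $S_I\subseteq[n]$, and under this correspondence $S_{I+J}=S_I\cup S_J$ while $S_{I\cap J}=S_I\cap S_J$. Since $R$ is reduced, $J(R)=0$, so the vertex set of $\Gamma(R)$ is in bijection with the nontrivial proper subsets $\emptyset\neq S\subsetneq[n]$, and two vertices are adjacent precisely when $S_I\cup S_J=[n]$. From \cite[Theorem 2.4]{ye} we already know $\mathrm{diam}(\Gamma(R))\leq 3$, so it suffices to settle cases $d=2$ and $d=3$.

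For (1) and (2), I would handle both directions by exhibiting or obstructing common neighbors. If $I\cap J\neq 0$ and $I+J\neq R$, I would set $S_K=[n]\setminus(S_I\cap S_J)$; the hypotheses guarantee $\emptyset\neq S_K\neq[n]$, and by construction $S_K\supseteq[n]\setminus S_I$ and $S_K\supseteq[n]\setminus S_J$, producing a common neighbor $K$, so $d(I,J)=2$. Conversely, any common neighbor $K$ forces $S_K\supseteq([n]\setminus S_I)\cup([n]\setminus S_J)=[n]\setminus(S_I\cap S_J)$; since $S_K\neq[n]$, we must have $S_I\cap S_J\neq\emptyset$. This proves (1). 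For (2), if $I\cap J=0$ and $I+J\neq R$, the preceding observation shows no common neighbor exists, so $d(I,J)\geq 3$; the reverse direction comes from writing down the explicit path $I-K_1-K_2-J$ with $S_{K_1}=[n]\setminus S_I$ and $S_{K_2}=[n]\setminus S_J$, where adjacency $S_{K_1}\cup S_{K_2}=[n]$ follows exactly from $S_I\cap S_J=\emptyset$, combined with the diameter bound.

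For (3) I would split into the two natural subcases. Suppose first $I+J=R$, so $d(I,J)=1$ and $S_I\cup S_J=[n]$. Since $J$ is proper, $S_J\neq[n]$, so there exists $i\in S_I\setminus S_J$; the vertex $K$ with $S_K=[n]\setminus\{i\}$ then satisfies $[n]\setminus S_I\subseteq S_K$ (hence $K\in N(I)$), while $i\notin S_J\cup S_K$ gives $d(K,J)\geq 2>d(I,J)$, so $I$ is not maximally distant from $J$. Now suppose $I\subsetneq J$ (the case $J\subsetneq I$ is symmetric); then $S_I\subsetneq S_J$, so by part (1) we have $d(I,J)=2$. Take $K$ with $S_K=[n]\setminus S_J$, a valid vertex; one checks $S_K\cup S_J=[n]$ so $K\in N(J)$, while $S_K\cap S_I=\emptyset$ and $S_K\cup S_I\neq[n]$ (as $S_I\subsetneq S_J$), so by part (2) $d(K,I)=3>d(J,I)$, showing $J$ is not maximally distant from $I$.

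The proof is really just careful bookkeeping of subsets of $[n]$; the main (mild) obstacle is keeping track of which direction of the biconditional requires constructing a witness versus obstructing one, and making sure each witness $K$ is actually a legitimate vertex (i.e.\ that $S_K$ is neither empty nor all of $[n]$), which is where the hypothesis that both $I$ and $J$ are proper nonzero is repeatedly used.
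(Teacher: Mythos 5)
The paper itself omits the proof of Lemma \ref{lemma2d} (it is declared elementary, with only the diameter bound quoted from \cite{ye}), so there is no official argument to compare against; judged on its own, your subset bookkeeping is the natural route, and your parts (1), (2) and the containment case of (3) are correct (the implicit distinctness checks $K\neq I,J$ you skip there are automatic from the hypotheses). However, there is a genuine gap in the case $I+J=R$ of part (3). You choose $i\in S_I\setminus S_J$ and the witness $K$ with $S_K=[n]\setminus\{i\}$, and claim $i\notin S_J\cup S_K$ forces $d(K,J)\geq 2$; but this needs $K\neq J$, and $K=J$ happens precisely when $S_J=[n]\setminus\{i\}$, i.e.\ when $J$ is a maximal ideal (then $i$ is the unique element of $S_I\setminus S_J=[n]\setminus S_J$, so you cannot re-choose $i$). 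In that situation $d(K,J)=0$ and your argument shows nothing; indeed for $S_I=\{i\}$, $S_J=[n]\setminus\{i\}$ one has $N(I)=\{J\}$, so $I$ \emph{is} maximally distant from $J$, and the non-mutual-maximal-distance must be witnessed on the other side, e.g.\ by a neighbor $w$ of $J$ with $S_w=\{i,b\}$, $b\neq i$, which satisfies $d(I,w)\geq 2$ --- but this requires $n\geq 3$.

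This is not merely a bookkeeping slip: for $n=2$ the two vertices of $\Gamma(R)=K_2$ are adjacent and mutually maximally distant (consistent with Lemma \ref{e32e}(1), where $\Gamma(R)_{SR}=K_2$), so statement (3) with $I+J=R$ is simply false there, and no choice of witness can rescue your argument as written. To repair the proof you should split the case $I+J=R$: if $S_J\neq[n]\setminus\{i\}$ for some $i\in S_I\setminus S_J$ your construction works (or take $S_w=[n]\setminus S_I$ when $S_I\neq\{i\}$); if $J$ is maximal and $S_I=\{i\}$, exhibit instead a neighbor of $J$ at distance $\geq 2$ from $I$, which exists exactly when $n\geq 3$; and record that part (3) is only asserted (and only used later) in the regime $n\geq 3$, or treat $n=2$ separately.
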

In order to present our results we need to introduce some more terminologies.

Let $R$ be a ring and $\Gamma(R)$ be the co-maximal ideal graph of $R$.
We define $\Gamma(R)^{**}$ as the graph with vertex set $V(\Gamma(R)^{**})=V(\Gamma(R))$ such
that two vertices $I,J$ are adjacent in $\Gamma(R)^{**}$ if and only if $I+J\neq R$, $I\nsubseteq J$ and $J\nsubseteq I$.
Also, we define $\Gamma(R)^{*}$ as follows: Let $\Gamma(R)^*=\Gamma(R)$, if  $\Gamma(R)$ is complete, and otherwise $\Gamma(R)^*$ is obtained from $\Gamma(R)^{**}$
by removing all its isolated vertices.

Let  $R\cong R_1\times\cdots \times R_n$, where  $R_i$ is a ring for every $1\leq i\leq n$. Let $I=(I_1,I_2,\dots,I_n)$
be an ideal of $R$. By $NZC(I)$, we mean the number of nonzero components of $I$.

\begin{lem}\label{e32e}
Let $R$ be a reduced ring and $sdim_M(\Gamma(R))<\infty$.  Then the following statements hold.

$(1)$ If $|\mathrm{Max}(R)|=2$, then   $\Gamma(R)^{**}=\Gamma(R)^*=\Gamma(R)=\Gamma(R)_{SR}=K_2$.

$(2)$
If $|\mathrm{Max}(R)|\geq 3$, then $I$ is an isolated vertex in $\Gamma(R)^{**}$ if and only if   $I\in \mathrm{Max}(R)$.

 $(3)$ If $|\mathrm{Max}(R)|=n\geq 3$, then  $\Gamma(R)^{**}=H+K_{1}+K_{1}+\cdots+K_{1}$ ($n$ times), where $H$ is a connected graph.

$(4)$ $\Gamma(R)^*=\Gamma(R)_{SR}$.
\end{lem}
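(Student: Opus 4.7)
{
The plan is to translate everything into combinatorics on subsets of $[n]=\{1,\dots,n\}$. Since $R$ is reduced and $sdim_M(\Gamma(R))<\infty$, Corollary \ref{dimhelpnonreds} forces $\Gamma(R)$ to be finite, and a standard argument then gives $R\cong F_1\times\cdots\times F_n$ for fields $F_i$, with $n=|\mathrm{Max}(R)|$. Every ideal is encoded by a subset $S\subseteq[n]$, and for nonempty proper $S,T$ the conditions $I_S+I_T=R$, $I_S\subseteq I_T$, $I_S\cap I_T=0$ translate to $S\cup T=[n]$, $S\subseteq T$, $S\cap T=\emptyset$ respectively, with maximal ideals being exactly those $I_S$ having $|S|=n-1$.

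Part (1) is a direct check at $n=2$: the two vertices $I_{\{1\}},I_{\{2\}}$ satisfy $I_{\{1\}}+I_{\{2\}}=R$, so $\Gamma(R)=K_2$; then $\Gamma(R)^{*}=\Gamma(R)$ by definition, and $\Gamma(R)_{SR}=K_2$ since any two adjacent vertices of $K_2$ are mutually maximally distant. For part (2), if $|S|=n-1$ every other vertex $I_T$ is comparable with $I_S$ (when $n\notin T$) or co-maximal with it (when $n\in T$), so $I_S$ is isolated in $\Gamma(R)^{**}$. Conversely, given $|S|\leq n-2$, I pick $s\in S$ and two distinct indices $i,j\in[n]\setminus S$ and set $T=(S\setminus\{s\})\cup\{i\}$; then $S,T$ are incomparable and $S\cup T\subseteq[n]\setminus\{j\}\neq[n]$, exhibiting a $\Gamma(R)^{**}$-neighbor of $I_S$.

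For part (3), (2) already identifies the $n$ isolated vertices with the maximal ideals, so what remains is to show that the induced subgraph $H$ on non-maximal vertices is connected. I will route through the clique of singletons: any two distinct singleton subsets yield adjacent vertices in $\Gamma(R)^{**}$ since $n\geq 3$ forces $\{i\}\cup\{j\}\neq[n]$; and for any non-maximal $I_S$, picking $k\in[n]\setminus S$ produces a singleton $I_{\{k\}}$ adjacent to $I_S$ in $\Gamma(R)^{**}$. Thus the singletons form a dominating clique, establishing connectedness of $H$.

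For part (4), the case $n=2$ is part (1), so assume $n\geq 3$. The plan is to show that mutually maximally distant pairs in $\Gamma(R)$ coincide with edges of $\Gamma(R)^{**}$ restricted to non-maximal vertices; then both the vertex-set and edge-set equalities $\Gamma(R)^{*}=\Gamma(R)_{SR}$ follow. Lemma \ref{lemma2d}(3) gives one direction and, combined with the analysis in (2), rules maximal ideals out of $\partial(\Gamma(R))$. For the reverse, take non-maximal $I_S,I_T$ with $S,T$ incomparable and $S\cup T\neq[n]$; by Lemma \ref{lemma2d}, $d(I_S,I_T)\in\{2,3\}$. If $d=3$, then since $\mathrm{diam}(\Gamma(R))\leq 3$ every neighbor of $I_S$ is automatically within distance $d$ of $I_T$. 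The main technical step is the $d=2$ case: any neighbor $I_U$ of $I_S$ has $U\supseteq[n]\setminus S$, and if $T\cap U$ were empty then $T\subseteq S$, contradicting incomparability; hence $T\cap U\neq\emptyset$ and $d(I_T,I_U)\leq 2$, so $I_S$ is maximally distant from $I_T$, with a symmetric argument completing mutual maximal distance. I expect this short distance-$2$ analysis to be the only substantive obstacle; the rest is routine bookkeeping on subsets.
}
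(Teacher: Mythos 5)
Your proof is correct and follows essentially the same route as the paper: reduce to $R\cong F_1\times\cdots\times F_n$, identify the maximal ideals as exactly the isolated vertices of $\Gamma(R)^{**}$, obtain connectedness of $H$ from the clique of ideals with a single nonzero component, and prove $(4)$ by matching mutually maximally distant pairs with $\Gamma(R)^{**}$-edges via the same distance-$2$ neighbor analysis and the distance-$3$-equals-diameter observation. The local deviations are minor refinements rather than a different method (the subset encoding, your explicit neighbor $T=(S\setminus\{s\})\cup\{i\}$ in $(2)$, and your exclusion of maximal ideals from $\partial(\Gamma(R))$ via Lemma \ref{lemma2d}$(3)$ together with $(2)$, which is cleaner than the paper's annihilator argument), so nothing further is needed.
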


\begin{proof}
{$(1)$ Since $sdim_M({\Gamma}(R))$ is finite, $R$ has  finitely many ideals, by Corollary \ref{dimhelpnonreds} and so
 \cite[Theorem 8.7]{ati} implies that $R\cong F_1\times\cdots \times F_n$, where  $F_i$ is a field for every $1\leq i\leq n=|\mathrm{Max}(R)|$.
 If $n=2$,
since  $\Gamma(R)=K_2$, we have  $\Gamma(R)^{**}=\Gamma(R)^*=\Gamma(R)=\Gamma(R)_{SR}=K_2$.

$(2)$ First we show that $I$ is an isolated vertex in $\Gamma(R)^{**}$, for every  $I\in \mathrm{Max}(R)$.  Assume that
$I$ is  adjacent to $J$ in $\Gamma(R)^{**}$. Then $I\nsubseteq J$ and $J\nsubseteq I$. Hence $I+J= R$, a contradiction.
Let $A= V(\Gamma(R)^{**})\setminus  \mathrm{Max}(R)$. To complete the proof, we show that $\Gamma(R)^{**}[A]$ is a connected graph. To see this,
let   $S=\{(F_1,0,\dots,0),(0,F_2,0,\dots,0),\ldots,(0,\dots,0,F_n)\}$. If $I,J\in S$, then   $I+J\neq R$, $I\nsubseteq J$ and $J\nsubseteq I$. Thus $I$ is  adjacent to $J$ in $\Gamma(R)^{**}[A]$.
Therefore, the induced subgraph on $S$ is a clique in $\Gamma(R)^{**}[A]$.
Hence suppose that  $I \in V(\Gamma(R)^{**}) \setminus S\cup\mathrm{Max}(R)$. Then    $I$ is adjacent to some vertices of $S$ in
$\Gamma(R)^{**}[A]$, because at least two  components of $I$ are zero.

 $(3)$ By $(2)$, it is obvious.

$(4)$ First we claim that $ V(\Gamma(R)^*)=V(\Gamma(R)_{SR})$. By parts (2) and (3),  $ V(\Gamma(R)^*)=V({\Gamma}(R))\setminus \mathrm{Max}(R)$. We prove that $V(\Gamma(R)_{SR})= V({\Gamma}(R))\setminus \mathrm{Max}(R)$. Let $I\in \mathrm{Max}(R)$. It is shown that  there is no $J\in V(\Gamma(R))$ such that $I$ and $J$ are  mutually  maximally distant. If not, $I\nsubseteq J$ and $J\nsubseteq I$, by Lemma \ref{lemma2d}. Since $I\in \mathrm{Max}(R)$, $I+J=R$ in $\Gamma(R)$. Hence $d(I,J)=1$. If $J\neq Ann(I)$, then $d(I,Ann(I))=1$, but $Ann(I)\subseteq J$ and so
$d(J,Ann(I))\neq1$, a contradiction. If $J= Ann(I)$, for some $Ann(I)\subseteq K$, then $d(I,Ann(I))=1$, but $Ann(I)\subseteq K$ and so
$d(K,Ann(I))\neq1$, a contradiction. Hence  $I$ is an isolated vertex in $\Gamma(R)_{SR}$ and so $V(\Gamma(R)_{SR})\cap\mathrm{Max}(R)=\varnothing$. Next, assume that $S=\{I\in V({\Gamma}(R))\,|\, NZC(I)=1\}$ and
 $I,J\in S$. Then
$d(I,J)_{{\Gamma}(R)}=3=diam({\Gamma}(R))$. This implies that $I,J$ are mutually  maximally distant and hence $I$ is  adjacent to $J$ in ${\Gamma}(R)_{SR}$.
Therefore, the induced subgraph on $S$ is a clique in ${\Gamma}(R)_{SR}$.
  Now, we show that if $I\not\in \mathrm{Max}(R)\cup S$, then $I$ is adjacent to some  vertices of $S$  in
${\Gamma}(R)_{SR}$.  But this is obvious, because it is not hard to see that $d(I,J)=3=daim(\Gamma(R))$, for some $J\in S$.
Therefore,
$\partial({\Gamma}(R))=V({\Gamma}(R))\setminus \mathrm{Max}(R)$ and so the claim is proved.
To complete the proof, it is enough to show that the adjacency between vertices in $\Gamma(R)^*$ is in a one to one correspondence between vertices in  $\Gamma(R)_{SR}$ and vice versa.
 Assume that $I,J\in V(\Gamma(R)^*)$ and $I$ is adjacent to $J$. Thus $I+J\neq R$, $I\nsubseteq J$ and $J\nsubseteq I$.
Since $I+J\neq R$, $d(I,J)_{\Gamma(R)}\neq 1$. Indeed,  $d(I,J)_{\Gamma(R)}\in \{2,3\}$. If $d(I,J)_{\Gamma(R)}=3$, then $d(I,J)=daim(\Gamma(R))$. So
$I,J$ are mutually  maximally distant. Hence $I$ is adjacent to $J$ in
$\Gamma(R)_{SR} $. Therefore, suppose that $d(I,J)_{\Gamma(R)}=2$ and  $K\in N_{\Gamma(R)}(I)$.
Since $I+K=R$ and $J\nsubseteq I$, $K\cap J\neq 0$. By Lemma \ref{lemma2d},  $d(K,J)_{\Gamma(R)}\leq 2$. Thus
$d(K,J)_{\Gamma(R)}\leq d(I,J)_{\Gamma(R)}$.
 Similarly, $d(L,I)_{\Gamma(R)}\leq d(I,J)_{\Gamma(R)}=2$, for every $L\in N(J)$. So
$I,J$ are mutually  maximally distant and hence
 $I$ is adjacent to $J$ in
$\Gamma(R)_{SR} $.
Finally,l let $I,J\in V(\Gamma(R)_{SR})$ and $I$ is adjacent to $J$.  Thus $I,J$ are mutually  maximally distant.
By Lemma \ref{lemma2d}, $I+J\neq R$, $I\nsubseteq J$ and $J\nsubseteq I$ and so
 $I$ is adjacent to $J$ in $\Gamma(R)^*$.
}
\end{proof}

\begin{lem}\label{e32}
Suppose that $R\cong F_1\times\cdots \times F_n$, where  $F_i$ is a field for every $1\leq i\leq n$ and $S$  is the largest independent
set of $\Gamma(R)_{SR}$. Then the followings hold.

$(1)$  $\partial({\Gamma}(R))=V({\Gamma}(R))\setminus \mathrm{Max}(R)$.

$(2)$ $NZC(I)=n-2$, for some $I\in S$.

$(3)$  $\beta({\Gamma}(R)_{SR})=n-2$, if $n\geq 3$.
\end{lem}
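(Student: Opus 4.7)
The plan splits naturally into three steps, one per part. Part (1) requires only a quick appeal to what was already established inside the proof of Lemma 2.5(4): every maximal ideal was shown there to have no mutually maximally distant partner, while every non-maximal vertex was shown to be mutually maximally distant from some ideal of $NZC=1$. Recording that observation yields $\partial(\Gamma(R)) = V(\Gamma(R)) \setminus \mathrm{Max}(R)$ immediately.

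For parts (2) and (3) I would first translate the problem into set combinatorics. For each $I \in V(\Gamma(R))$ put $B(I) = \{i \in \{1,\dots,n\} : \pi_i(I) = 0\}$, the complement of the support, so that $|B(I)| = n - NZC(I)$. Because $R$ is a product of fields, the map $I \mapsto B(I)$ is a bijection onto the proper nonempty subsets of $\{1,\dots,n\}$, and by Lemma 2.5(2) the set $V(\Gamma(R)_{SR})$ is exactly the preimage of those $B$ with $|B| \in \{2,\dots,n-1\}$. Under this dictionary, Lemma 2.5(4) translates non-adjacency in $\Gamma(R)_{SR}$ to: $B(I)$ and $B(J)$ are either disjoint (the case $I+J=R$) or nested. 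Hence an independent set $S$ of $\Gamma(R)_{SR}$ corresponds bijectively with a \emph{laminar family} $\mathcal{B}(S) = \{B(I) : I \in S\}$ on $\{1,\dots,n\}$ all of whose members have cardinality in $[2,n-1]$.

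The lower bound $\beta(\Gamma(R)_{SR}) \geq n-2$ in (3) is witnessed by the ascending chain $I_k := F_1 \times \cdots \times F_k \times 0 \times \cdots \times 0$ for $k = 1,\dots,n-2$, which is totally ordered by inclusion, hence pairwise non-adjacent in $\Gamma(R)_{SR}$. For the matching upper bound I would enlarge $\mathcal{B}(S)$ by adjoining the $n$ singletons $\{i\}$ together with the whole set $\{1,\dots,n\}$; these additions preserve laminarity, so the augmented family $\mathcal{B}^+$ is laminar with exactly $|S| + n + 1$ members. Invoking the classical bound that any laminar family on an $n$-element set has at most $2n-1$ members (provable by a short induction that writes the family as a rooted forest with pairwise-disjoint siblings) gives $|S| + n + 1 \leq 2n - 1$, i.e., $|S| \leq n - 2$.

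Part (2) then drops out as the extremal case of the previous step. If $|S| = n-2$, the inequality $|\mathcal{B}^+| \leq 2n-1$ is saturated, and this saturation forces the Hasse diagram of $\mathcal{B}^+$ (rooted at $\{1,\dots,n\}$ with the $n$ singletons as leaves) to be a binary tree in which every internal node is the disjoint union of its two children. The deepest non-leaf vertex of this tree then has two singleton children, so its cardinality is exactly $2$; since $n \geq 3$, this set is neither a singleton nor $\{1,\dots,n\}$, and therefore it lies in $\mathcal{B}(S)$ itself, corresponding to some $I \in S$ with $NZC(I) = n-2$.

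The hardest part will be the extremal combinatorics: one must verify carefully that the augmentation preserves laminarity, and that saturation of the $2n-1$ bound really forces the full binary-tree partition structure (so that a cardinality-$2$ block is unavoidable). The ring-theoretic content — the dictionary through $B(I)$ and the chain construction — is routine once the previous lemmas are in hand.
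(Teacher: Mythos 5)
Your proof is correct, but it takes a genuinely different route from the paper's for parts $(2)$ and $(3)$ (part $(1)$ is handled the same way, by citing what was established in the proof of Lemma \ref{e32e}). The paper proves $(2)$ first and directly, by an exchange argument: take a member $I_t$ of $S$ of maximal $NZC$; if $NZC(I_t)\leq n-3$, replacing one zero component of $I_t$ by $F_i$ produces a new vertex $J$ that is comparable to or comaximal with every member of $S$, so $S\cup\{J\}$ is a larger independent set, a contradiction. It then proves $(3)$ by stratifying $V(\Gamma(R)_{SR})$ into the levels $A_i=\{I:NZC(I)=i\}$ and arguing level by level (Facts 1--4, Steps 1--3) that the chain $W_t=\{(F_1,0,\dots,0),\dots,(F_1,\dots,F_{n-2},0,0)\}$ is a maximum independent set. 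You instead pass to co-supports $B(I)$, observe via Lemma \ref{e32e}(4) that independent sets of $\Gamma(R)_{SR}$ are exactly laminar families of subsets of $\{1,\dots,n\}$ with member sizes in $[2,n-1]$, get the upper bound $|S|\leq n-2$ by adjoining the $n$ singletons and the ground set and invoking the classical $2n-1$ bound for laminar families, and recover $(2)$ from the equality analysis (full binary merge tree, whose deepest internal node is a $2$-set, i.e.\ an $I\in S$ with $NZC(I)=n-2$); the chain lower bound is the same witness as the paper's $W_t$. What each approach buys: your laminar-family argument gives a clean, fully rigorous upper bound where the paper's layered Facts/Steps are rather sketchy, but it imports an extremal lemma (with its equality structure) and makes $(2)$ a consequence of $(3)$, whereas the paper's short exchange argument proves $(2)$ independently and more cheaply; there is no circularity in your order, since your proof of $(3)$ does not use $(2)$. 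Like the paper, you implicitly need $n\geq 3$ in part $(2)$ (for $n=2$ the statement fails), so it would be worth saying so explicitly.
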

\begin{proof}
{$(1)$ By Lemma \ref{e32e}, it is obvious.

$(2)$
 Assume that $S=\{I_1,I_2,\dots,I_t\}$
is the largest independent
set of $\Gamma(R)_{SR}$. Clearly,
$1\leq NZC(I)\leq n-2$, for every $I\in V({\Gamma}(R)_{SR})$. With no loss of generality, assume that $NZC(I_t)\geq NZC(I_i)$ for every $1\leq i\leq t$ and $I_i\in S$. We claim that $ NZC(I_t)= n-2$.
Assume to the contrary, $ NZC(I_t) \leq n-3$.
Since $I_t$ is not  adjacent to $I_i$, for every $1\leq i\leq t$ and $I_i\in S$, we deduce that $I_i\subseteq I_t$ or $I_t\subseteq I_i$ or $I_t+I_i=R$.
If  $I_t\subseteq I_i$, for some $I_i\in S$, then  $NZC(I_i)\geq NZC(I_t)$, a contradiction. This implies that
 $I_i\subseteq I_t$  or $I_t+I_i=R$, for every  $I_i\in S$ and $1\leq i\leq t$.
 Assume that $S_1=\{I\in S\,\,|\,\,I\subseteq I_t\}$ and $S_2=\{I\in S\,\,|\,\,I+ I_t=R\}$.
Since $ NZC(I_t) \leq n-3$, by replacing one of the zero components of $I_t$ by $F_i$, we get $J\in V({\Gamma}(R)_{SR})$.
Hence $I\subseteq J$, for every $I\in S_1$ and $I+J=R$, for every $I\in S_2$. This implies that $S\cup \{J\}$
is a independent set of $\Gamma(R)_{SR}$, a contradiction. Therefore,  $ NZC(I_t)= n-2$ which completes the proof.

$(3)$
 Assume that

$A_1=\{I\in V({\Gamma}(R)_{SR})\,|\, NZC(I)=1\}$,

$A_2=\{I\in V({\Gamma}(R)_{SR})\,|\, NZC(I)=2\}$,

$A_3=\{I\in V({\Gamma}(R)_{SR})\,|\, NZC(I)=3\}$,

$\vdots$

$A_{n-2}=\{I\in V({\Gamma}(R)_{SR})\,|\, NZC(I)=n-2\}$. Consider the following facts:

{\bf{Fact 1.}} For every $1 \leq i\leq n-2$ and $I,J\in A_i$, since $NZC(I)=NZC(J)$, we conclude that $I\nsubseteq J$ and $J\nsubseteq I$.

{\bf{Fact 2.}} If $I,J\in A_i$ for every $1 \leq i\leq n-2$ and
$I$ is not  adjacent to $J$, then $I+J=R$, by Fact 1.

{\bf{Fact 3.}} Since there is no $I,J\in A_i$ such that $I+J=R$, for every $1 \leq i\leq [\dfrac{n-2}{2}]-1$, we conclude that
${\Gamma}(R)_{SR}[A_i]$ is complete, by Fact 2.

{\bf{Fact 4.}} For every $[\dfrac{n-2}{2}] \leq i\leq n-2$, assume that $S_i\subseteq A_i$ is the largest set of $A_i$ such that for every $I,J\in S_i$, $I+J=R$. Then $|S_i|=[\dfrac{n}{n-i}]$.

Continue the proof in the following steps:

{\bf{Step 1.}}
Put $I_1=(F_1,0,\dots,0), I_2=(F_1,F_2,0,\dots,0), \dots, I_i=(F_1,F_2,\dots,F_i,\dots,0)$, where $i=[\dfrac{n-2}{2}]-1$ and
$W_1=\{I_1,I_2,\dots,I_i\}$. Then $W_1$ is an independent
set of ${\Gamma}(R)_{SR}[\cup A_i]$. Since by Fact 3 ${\Gamma}(R)_{SR}[A_i]$ is complete and $|W_1\cap A_i|=1$, we deduce that
$W_1$
is the largest independent
set of ${\Gamma}(R)_{SR}[\cup A_i]$, for $i=1$ up to $i=[\dfrac{n-2}{2}]-1$ (if $n$ is odd up to $[\dfrac{n-2}{2}])$. This implies that $\beta({\Gamma}(R)_{SR}[\cup A_i])=|W_1|=[\dfrac{n-2}{2}]-1$.

{\bf{Step 2.}} Consider $A_i$, where $i=[\dfrac{n-2}{2}]$ and $n$ is even. Then we have  $|S_i|=2$.
Without loss of generality, assume that $S_i=\{I=(F_1,F_2,\dots,F_i,\dots,0),J=(0,\dots,0,F_{i+1},\dots,F_n) \}$.
This implies that $J$ is  adjacent to some  vertices of $W_1$, but $W_2=W_1\cup \{I\}$
is the largest independent
set of ${\Gamma}(R)_{SR}[\cup A_i]$ for $i=1$ up to $i=[\dfrac{n-2}{2}]$. Hence
 $\beta({\Gamma}(R)_{SR}[\cup A_i])=|W_2|=[\dfrac{n-2}{2}]$.

{\bf{Step 3.}} Continue the procedure in Step 2 up to $t=n-2-[\dfrac{n-2}{2}]$ and get
$W_t=\{(F_1,0,\dots,0),(F_1,F_2,0,\dots,0),\dots,(F_1,F_2,\dots,F_{n-2},0,0) \}$. Therefore,
 $$\beta({\Gamma}(R)_{SR}[\cup_{1}^{n-2} A_i])=\beta({\Gamma}(R)_{SR})=|W_t|=n-2.$$
}
\end{proof}
We are now in a position to state the main result of this section.

\begin{thm}\label{dimprod}
Suppose that $R$ is a reduced ring. If  $sdim_M({\Gamma}(R))$ is finite, then

$(1)$ If  $|\mathrm{Max}(R)|=2$, then $sdim_M({\Gamma}(R))=1$.

$(2)$ If  $|\mathrm{Max}(R)|=n\geq 3$, then
 $sdim_M({\Gamma}(R))=2^n-2n$.
\end{thm}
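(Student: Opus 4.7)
The plan is to reduce the problem to a combinatorial computation on $\Gamma(R)_{SR}$ and then assemble the preceding lemmas with Gallai's Theorem. Since $sdim_M(\Gamma(R))$ is assumed finite, Corollary \ref{dimhelpnonreds} forces $\Gamma(R)$ (and hence the ideal lattice of $R$) to be finite; combined with $R$ being reduced and \cite[Theorem~8.7]{ati}, this yields $R\cong F_1\times\cdots\times F_n$ with each $F_i$ a field and $n=|\mathrm{Max}(R)|$. In particular $J(R)=0$, and the ideals of $R$ are in bijection with subsets of $\{1,\dots,n\}$, giving exactly $2^n$ ideals in total.

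For part $(1)$, if $n=2$ then Lemma \ref{e32e}(1) gives $\Gamma(R)_{SR}=K_2$, so $\alpha(\Gamma(R)_{SR})=1$ and Lemma \ref{Oellermann} immediately yields $sdim_M(\Gamma(R))=1$.

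For part $(2)$ the argument will be pure assembly. First I count vertices of $\Gamma(R)$: excluding the zero ideal (the unique proper ideal contained in $J(R)=0$) and $R$ itself, one obtains $|V(\Gamma(R))|=2^n-2$. Next, by Lemma \ref{e32e}(4) together with the identification $V(\Gamma(R)^*)=V(\Gamma(R))\setminus\mathrm{Max}(R)$ carried out in its proof, the vertex set of the strong resolving graph is $V(\Gamma(R)_{SR})=V(\Gamma(R))\setminus\mathrm{Max}(R)$, hence $|V(\Gamma(R)_{SR})|=2^n-2-n$. Now Lemma \ref{e32}(3) supplies $\beta(\Gamma(R)_{SR})=n-2$, and Gallai's Theorem (Lemma \ref{Gallai}) gives
\[
\alpha(\Gamma(R)_{SR})=|V(\Gamma(R)_{SR})|-\beta(\Gamma(R)_{SR})=(2^n-2-n)-(n-2)=2^n-2n.
\]
A final appeal to Lemma \ref{Oellermann} converts this into $sdim_M(\Gamma(R))=2^n-2n$.

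There is really no hard step left at this point: the genuine difficulty has already been absorbed into Lemmas \ref{e32e} and \ref{e32}, where the structural identification $\Gamma(R)^*=\Gamma(R)_{SR}$ and the delicate combinatorial computation $\beta(\Gamma(R)_{SR})=n-2$ were established. Once those are in hand, the theorem reduces to a direct vertex count followed by a single application of Gallai's identity. As a sanity check, $n=3$ yields $2^3-2\cdot 3=2$, matching the value $sdim_M(\Gamma(\mathbb{Z}_2\times\mathbb{Z}_2\times\mathbb{Z}_2))=2$ worked out in the example.
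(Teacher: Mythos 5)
Your proposal is correct and follows essentially the same route as the paper: reduce via Corollary \ref{dimhelpnonreds} and \cite[Theorem~8.7]{ati} to $R\cong F_1\times\cdots\times F_n$, handle $n=2$ via $K_2$, and for $n\geq 3$ combine the vertex count $|\partial(\Gamma(R))|=2^n-2-n$ with $\beta(\Gamma(R)_{SR})=n-2$ from Lemma \ref{e32}, then apply Gallai's Theorem and Lemma \ref{Oellermann}. The only cosmetic difference is that you route the $n=2$ case through $\Gamma(R)_{SR}=K_2$ and $\alpha(K_2)=1$, whereas the paper cites $sdim_M(K_2)=1$ directly.
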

\begin{proof}
{Since $sdim_M({\Gamma}(R))$ is finite, $R$ has  finitely many ideals, by Corollary \ref{dimhelpnonreds}, and so \cite[Theorem 8.7]{ati} implies that
$R\cong F_1\times\cdots \times F_n$, where  $F_i$ is a field for every $1\leq i\leq n=|\mathrm{Max}(R)|$.
If $n=2$, then $\Gamma(R)=K_2$ and so $sdim_M({\Gamma}(R))=1$. If $n\geq 3$, then
by Lemmas \ref{Gallai} and \ref{Oellermann}, $$sdim_M({\Gamma}(R))=\alpha({\Gamma}(R)_{SR})=V({\Gamma}(R)_{SR})-\beta({\Gamma}(R)_{SR}).$$
On the other hand, $\partial({\Gamma}(R))=2^{n}-2-n$ and $\beta({\Gamma}(R)_{SR})=n-2$, by Lemma \ref{e32}. Therefore,
 $sdim_M({\Gamma}(R))=2^{n}-2-n-(n-2)=2^n-2n$.
}
\end{proof}
{\begin{center}{\section{Non-reduced rings case}}\end{center}}\vspace{-2mm}

In this section, we study the strong metric dimension of  $\Gamma(R)$, when $R$ is non-reduced.

As we have seen in Corollary \ref{dimhelpnonreds}, $sdim_M(\Gamma(R))$ is finite if and only if  $\Gamma(R)$ is finite. Hence in this section, we focus on rings with finitely many ideals. Obviously, such rings are Artinian and it follows from \cite[Theorem 8.7]{ati} that there exists a positive integer $n$ such that $R\cong R_1\times \cdots\times R_n$, where $R_i$ is an Artinian local ring for every $1\leq i\leq n$. If every $R_i$ is non-reduced, then it is not a field and so $|A(R_i)^*|\geq 1$ (It is not hard to see that if $R$ is Artinian and non-reduced, then every proper ideal has a non-zero annihilator). Now, let $R$ be a such ring. Suppose that  $I= (I_1,\dots, I_n)$ and $J= (J_1,\dots,J_n)$ are two vertices of $\Gamma(R)$. Define the relation  $\thicksim$ on  $A(R)^*$ as follows: $I\thicksim J$, whenever for each $1\leq i\leq n$,
 ``$I_i\subseteq Nil(R_i)$ if and only if $J_i\subseteq Nil(R_i)$''.
   Clearly, $\thicksim$ is an equivalence relation on $A(R)^*$.
   The equivalence class of $I$ is denoted by
   $[I]$.   Suppose that  $I= (I_1,\dots, I_n)$ is and ideal of $R$. Then  by $I^{\prime}= (I_1^{\prime},\dots, I_n^{\prime})$, we mean a new ideal obtained from $I$ whose  all nonzero nilpotent components are replaced  by $0$.

   We define $\Gamma(R)^{\prime}$ as the graph with vertex set $V(\Gamma(R)^{\prime})=V(\Gamma(R))$ such that two vertices $I,J$ are adjacent in $\Gamma(R)^{\prime}$ if and only if $[I]=[J]$ or $I^{\prime}+J^{\prime}\neq R$, $I^{\prime}\nsubseteq J^{\prime}$ and $J^{\prime}\nsubseteq I^{\prime}$.

The proof of the following lemma is obvious.

\begin{lem}\label{e32ezs}
 Suppose that $R\cong R_1\times\cdots \times R_n$, where $R_i$ is an Artinian local ring for every $1\leq i\leq n$.  If $I$ and $J$ are vertices of $\Gamma(R)$ with $[I]=[J]$, then $N[I]=N[J]$.
\end{lem}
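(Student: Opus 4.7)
The plan is to unpack the equivalence $[I]=[J]$ in terms of the local structure of the $R_i$, and then observe that adjacency in $\Gamma(R)$ only depends on which components of an ideal equal the whole factor ring.

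The first step is to note that since each $R_i$ is Artinian local, $Nil(R_i)=\mathfrak{m}_i$, the unique maximal ideal, and every proper ideal of $R_i$ is contained in $\mathfrak{m}_i$. Hence, for a component $I_i$ of a vertex $I=(I_1,\dots,I_n)$, one has $I_i\subseteq Nil(R_i)$ if and only if $I_i\ne R_i$. This lets me rewrite the condition $[I]=[J]$ as
$$\{\,i:I_i=R_i\,\}=\{\,i:J_i=R_i\,\},$$
so $I$ and $J$ carry exactly the same pattern of ``full'' components in the product decomposition.

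The second step is to translate adjacency in $\Gamma(R)$ through the same lens. For any vertex $K=(K_1,\dots,K_n)$ of $\Gamma(R)$, $I+K=R$ iff $I_i+K_i=R_i$ for every $i$, and the locality of $R_i$ forces $I_i+K_i=R_i$ to be equivalent to $I_i=R_i$ or $K_i=R_i$. Consequently, whether $K$ is adjacent (or equal) to $I$ is determined entirely by the full-component pattern of $I$ combined with that of $K$; the analogous description holds for $J$.

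Combining the two, since $I$ and $J$ share the same full-component pattern, the adjacency condition with respect to $I$ is literally the same predicate on $K$ as the adjacency condition with respect to $J$, so $K\in N[I]$ exactly when $K\in N[J]$. The main and only substantive point is the first observation that in an Artinian local ring the nilradical equals the maximal ideal; once this is in hand, the lemma collapses to an index-by-index bookkeeping check on $\{1,\dots,n\}$, matching the authors' remark that the proof is obvious.
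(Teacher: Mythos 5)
Your two structural observations are correct and are surely the content behind the paper's remark that the proof is ``obvious'': in an Artinian local ring $R_i$ the nilradical equals the unique maximal ideal $\mathfrak{m}_i$, so $[I]=[J]$ says exactly that $I$ and $J$ have the same set of full components, and $I+K=R$ holds iff for every $i$ either $I_i=R_i$ or $K_i=R_i$. The flaw is in your last step, in the parenthetical ``(or equal)'': membership of $K$ in the \emph{closed} neighborhood $N[I]$ is \emph{not} determined by full-component patterns, because the clause $K=I$ distinguishes $I$ from its class-mates. In fact two distinct vertices $I\neq J$ with $[I]=[J]$ are never adjacent in $\Gamma(R)$: since $I$ is proper there is an index $i$ with $I_i\neq R_i$, hence also $J_i\neq R_i$, and then $I_i+J_i\subseteq \mathfrak{m}_i$, so $I+J\neq R$. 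Consequently $I\in N[I]$ but $I\notin N[J]$, so for distinct class-mates one always has $N[I]\neq N[J]$. Concretely, in $R=\mathbb{Z}_4\times\mathbb{Z}_4$ take $I=(\mathbb{Z}_4,(0))$ and $J=(\mathbb{Z}_4,(2))$: then $N[I]=\{I,((0),\mathbb{Z}_4),((2),\mathbb{Z}_4)\}$ while $N[J]=\{J,((0),\mathbb{Z}_4),((2),\mathbb{Z}_4)\}$.

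What your argument does establish is the open-neighborhood equality $N(I)=N(J)$: a vertex $K$ is adjacent to $I$ iff $K_i=R_i$ for every $i$ outside the common full-component set $T$ of $I$ and $J$ (and such a $K$ automatically differs from both $I$ and $J$, since $T\neq\{1,\dots,n\}$), and this condition is symmetric in $I$ and $J$. That open version is what the paper actually uses downstream --- in the proof of part $(2)$ of Lemma \ref{e32ez} the deduction of mutual maximal distance is made from $N(I)=N(J)$, and Lemma \ref{dimhelpnonred}$(2)$ accepts either $N(u)=N(v)$ or $N[u]=N[v]$ --- so the statement should be read (or restated) with open neighborhoods, and your proof then goes through verbatim once ``(or equal)'' is deleted. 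As literally written, however, your final combination step asserts something false, and no repair of that step can give $N[I]=N[J]$ for $I\neq J$ in the same class.
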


\begin{lem}\label{e32ez}
 Suppose that $R\cong R_1\times\cdots \times R_n$, where $R_i$ is an Artinian local ring and $|A(R_i)^*|\geq 1$,  for every $1\leq i\leq n$. Then the following statements hold.

$(1)$   $\Gamma(R)^{\prime}=H^{\prime}+K_{|A(R_1)|}+K_{|A(R_2)|}+\cdots+K_{|A(R_n)|}$, where $H^{\prime}=\varnothing$ if $n=2$ and $H^{\prime}$ is connected if $n\geq 3$.

$(2)$ $\Gamma(R)^{\prime}=\Gamma(R)_{SR}$.
\end{lem}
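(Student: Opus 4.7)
The strategy is to follow the template of Lemma \ref{e32e}, using the equivalence classes $[I]$ of $\sim$ in place of single vertices and the primed ideal $I^{\prime}$ in place of the componentwise pattern. Throughout I would track the $R$-support $S_I=\{s:I_s=R_s\}$, so that $I^{\prime}+J^{\prime}\ne R$ translates to $S_I\cup S_J\ne\{1,\dots,n\}$ and $I^{\prime}\subseteq J^{\prime}$ to $S_I\subseteq S_J$. This reduces all adjacency questions in $\Gamma(R)^{\prime}$ to a combinatorial analysis of supports.

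For part $(1)$, I would first exhibit the $n$ cliques. For each $1\le j\le n$, let $C_j$ be the set of vertices $I$ with $I_k=R_k$ for every $k\ne j$ and $I_j$ a proper ideal of $R_j$. Because $R_j$ is Artinian local and non-reduced, every proper ideal lies in $A(R_j)$, so $|C_j|=|A(R_j)|\ge 2$; and since all elements of $C_j$ share the same $'$-pattern, $C_j$ induces the complete graph $K_{|A(R_j)|}$ in $\Gamma(R)^{\prime}$. Next I would show that no edge of $\Gamma(R)^{\prime}$ leaves $C_j$: the only alternative is the second adjacency rule, but for $I\in C_j$ the condition $I^{\prime}+J^{\prime}\ne R$ forces $J_j$ proper while $J^{\prime}\nsubseteq I^{\prime}$ forces $J_j=R_j$, a contradiction. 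Removing all the $C_j$'s leaves $H^{\prime}$, the induced subgraph on vertices with $1\le NZC(I^{\prime})\le n-2$, which is empty for $n=2$. For $n\ge 3$, I would designate the $n$ coordinate ideals $I^{(i)}$ (with $I^{(i)}_i=R_i$ and zeros elsewhere) as a connecting hub: they form a clique in $H^{\prime}$, and any other $K\in H^{\prime}$ is adjacent in $\Gamma(R)^{\prime}$ to $I^{(j)}$ for each $j$ with $K_j$ proper, by direct verification of the three adjacency conditions; at least one such $j$ exists because $NZC(K^{\prime})\le n-2$.

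For part $(2)$, vertex-set equality follows from a twin argument: since $|A(R_i)|\ge 2$ for every $i$ and every vertex has at least one proper component, every $I$ admits a $J\ne I$ with $[J]=[I]$, and hence $N[I]=N[J]$ by Lemma \ref{e32ezs}; closed adjacent twins are mutually maximally distant, so every vertex lies in $\partial(\Gamma(R))$. For the edge correspondence, if $I,J$ are adjacent in $\Gamma(R)^{\prime}$ via $[I]=[J]$ the twin argument already yields MMD; if via the second rule, a support-based distance computation gives $d(I,J)\in\{2,3\}$, and for each $w\in N(I)$ any $s\in S_J\setminus S_I$ furnishes a common neighbour of $w$ and $J$, so $d(J,w)\le d(I,J)$, paralleling the proof of Lemma \ref{e32e}$(4)$. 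Conversely, if $I,J$ are MMD with $[I]\ne[J]$, I would rule out each of $I^{\prime}+J^{\prime}=R$, $I^{\prime}\subseteq J^{\prime}$, and $J^{\prime}\subseteq I^{\prime}$ by constructing, from the failing inclusion, a witness $w\in N(I)\cup N(J)$ whose distance to the opposite vertex strictly exceeds $d(I,J)$.

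The main obstacle is this converse direction. The cleanest witnesses are vertices with support exactly $T_I$ or $T_J$ and proper components chosen in the complementary positions, and the delicate part is checking that no common neighbour of such a $w$ with the opposite endpoint exists; the computation is slightly more involved than in the reduced case because the non-$R$ components of $w$ may be either zero or merely proper. All the same, the analysis is governed entirely by the support-pattern combinatorics developed in part $(1)$, so the proof should track Lemma \ref{e32e}$(4)$ step by step once the equivalence-class language is set up.
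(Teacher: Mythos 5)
Your proposal is correct and follows essentially the same route as the paper: your cliques $C_j$ are exactly the paper's classes $A_j$, the coordinate ideals $(0,\dots,0,R_i,0,\dots,0)$ serve as the same connecting clique for $H^{\prime}$, and part $(2)$ rests on the same twin argument plus the distance/MMD computations of Lemma \ref{e32e}$(4)$, merely rephrased in terms of supports instead of the primed ideals $I^{\prime}$. (One cosmetic remark: vertices with $[I]=[J]$ are non-adjacent twins satisfying $N(I)=N(J)$ rather than ``closed adjacent twins,'' but twins of either kind are mutually maximally distant, which is all that is needed and is precisely what the paper's proof uses.)
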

\begin{proof}
{Let

$A_1=\{I\in V({\Gamma}(R))\,|\, [I]=[(Nil(R_1), R_2,\dots,R_n)]\}$,

$A_2=\{I\in V({\Gamma}(R))\,|\, [I]=[(R_1, Nil(R_2),R_3,\dots,R_n)]\}$,

$\vdots$

$A_n=\{I\in V({\Gamma}(R))\,|\, [I]=[(R_1,\dots,R_{n-1},Nil(R_n))]\}$ and

$A=\cup A_i$.

Obviously, if $I,J\in A_i$, then $[I]=[J]$ and so $I$ is  adjacent to $J$ in $\Gamma(R)^{\prime}$. This implies that
 $\Gamma(R)^{\prime}[A_i]$ is a complete graph, for every $1\leq i\leq n$.
Suppose that $I\in A_i$ and $J\not\in A_i$. We show that $I$ is not  adjacent to $J$ in $\Gamma(R)^{\prime}$.
If not, since  $[I]\neq [J]$, we must have $I^{\prime}+J^{\prime}\neq R$, $I^{\prime}\nsubseteq J^{\prime}$ and $J^{\prime}\nsubseteq I^{\prime}$, a contradiction, as $I^{\prime}+J^{\prime}\neq R$.

Next, it is proved that $\Gamma(R)^{\prime}\setminus  A$ is a connected graph. For this,
let $$S=\{(R_1,0,\dots,0),(0,R_2,0,\dots,0),\ldots,(0,\dots,0,R_n)\}.$$
Using a proof technique similar to  Lemma \ref{e32e} implies that
  the induced subgraph on $S$ is a clique in $\Gamma(R)^{\prime}$
  and every $I \in V(\Gamma(R)^{\prime}) \setminus (S\cup A) $ is adjacent to some vertices of $S$ in
$\Gamma(R)^{\prime}$. Therefore,
$\Gamma(R)^{\prime}=H^{\prime}+K_{|A(R_1)|}+K_{|A(R_2)|}+\cdots+K_{|A(R_n)|}$.
By an easy verification, if $n=2$, then $H^{\prime}=\varnothing$.

$(2)$
 Let $I\in V(\Gamma(R)^{\prime})$. Since $|[I]|\geq 2$,  $I\sim J$ and $I\neq J$. Since $N(I)=N(J)$, we deduce that $I$ and $J$ are  mutually  maximally distant and hence $I\in V(\Gamma(R)_{SR})$.
Thus $ V(\Gamma(R)^{\prime})=V(\Gamma(R)_{SR})$.

We show that $\Gamma(R)^{\prime}=\Gamma(R)_{SR}$.
 Let $I,J\in V(\Gamma(R)^{\prime})$ and $I$ is adjacent to $J$.
If $[I]=[J]$, then  $I,J$ are mutually  maximally distant and so
 $I$ is adjacent to $J$ in
$\Gamma(R)_{SR} $.
If $[I]\neq [J]$, then
$I^{\prime}+J^{\prime}\neq R$, $I^{\prime}\nsubseteq J^{\prime}$ and $J^{\prime}\nsubseteq I^{\prime}$.
By a similar proof to that of Lemma \ref{e32e},
$ I^{\prime},J^{\prime}$ are mutually  maximally distant. Since $[I]=[I^{\prime}]$ and $[J]=[J^{\prime}]$, $I$ is adjacent to $J$ in
$\Gamma(R)_{SR} $.

Finally, suppose that $I,J\in V(\Gamma(R)_{SR})$ and $I$ is adjacent to $J$.
If $[I]=[J]$, then  $N(I)=N(J)$ and
so $I,J$ are mutually  maximally distant.
If $[I]\neq [J]$, then $I+J\neq R$, $I\nsubseteq J$ and $J\nsubseteq I$. Thus
$I^{\prime}+J^{\prime}\neq R$, $I^{\prime}\nsubseteq J^{\prime}$ and $J^{\prime}\nsubseteq I^{\prime}$.
Hence $I$ is adjacent to $J$ in $V(\Gamma(R)^{\prime})$.
}
\end{proof}

\begin{lem}\label{e32z}
 Suppose that $R\cong R_1\times\cdots \times R_n$, where $R_i$ is an Artinian local ring and $|A(R_i)^*|\geq 1$,  for every $1\leq i\leq n$. Then the following statements hold.

$(1)$  $\partial({\Gamma}(R))=V({\Gamma}(R))$.

$(2)$  $\beta({\Gamma}(R)_{SR})=2n-2$.
\end{lem}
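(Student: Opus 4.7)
The plan is to exploit the detailed structural decomposition of $\Gamma(R)_{SR}$ obtained in Lemma \ref{e32ez}, and for part~(2) to reduce the combinatorial estimate to the already-handled reduced case of Lemma \ref{e32}.

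For part~(1), the strategy is to show that every vertex of $\Gamma(R)$ has a ``twin'' in its $\thicksim$-class. The assumption $|A(R_i)^*|\geq 1$ means each $R_i$ is a non-field Artinian local ring, so $R_i$ contains at least two proper ideals (for instance $0$ and $\mathrm{Nil}(R_i)\neq 0$). Given any vertex $I=(I_1,\dots,I_n)$ of $\Gamma(R)$, at least one coordinate $I_k$ must be a proper ideal of $R_k$; replacing $I_k$ by a different proper ideal of $R_k$ produces a vertex $J\neq I$ with $[J]=[I]$. Lemma \ref{e32ezs} then gives $N[I]=N[J]$, from which $I$ and $J$ are mutually maximally distant, so $I\in\partial(\Gamma(R))$.

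For part~(2), Lemma \ref{e32ez} identifies $\Gamma(R)_{SR}$ with $\Gamma(R)^{\prime}$, which is the disjoint union of the connected graph $H^{\prime}$ and $n$ cliques $K_{|A(R_i)|}$. Since the independence number of a disjoint union is additive and each clique contributes exactly $1$, this yields $\beta(\Gamma(R)_{SR})=\beta(H^{\prime})+n$, reducing the task to proving $\beta(H^{\prime})=n-2$. I would encode each vertex $I$ of $H^{\prime}$ by its \emph{type} $S_I=\{j:I_j=R_j\}$, a proper nonempty subset of $\{1,\dots,n\}$ with $|S_I|\leq n-2$ (vertices with $|S_I|=n-1$ are precisely those in the clique components $A_i$, which sit outside $H^{\prime}$). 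The adjacency rules of $\Gamma(R)^{\prime}$ translate as follows: any two vertices of the same type are adjacent (they lie in one $\thicksim$-class), while two vertices of distinct types $S_I\neq S_J$ are adjacent iff $S_I,S_J$ are incomparable and $S_I\cup S_J\neq\{1,\dots,n\}$. Consequently an independent set uses at most one vertex per type, and $\beta(H^{\prime})$ equals the maximum number of types that are pairwise comparable or jointly cover $\{1,\dots,n\}$.

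For the lower bound, the chain $S_i=\{1,\dots,i\}$, $i=1,\dots,n-2$, supplies $n-2$ mutually comparable types; choosing any representative from each gives an independent set of size $n-2$. For the upper bound, the combinatorial graph whose vertices are the proper nonempty subsets of $\{1,\dots,n\}$ of size $\leq n-2$, with edges recording ``incomparable and not covering'', is precisely the reduced-case strong resolving graph $\Gamma(F_1\times\cdots\times F_n)_{SR}$ for a product of $n$ fields, whose independence number was shown to equal $n-2$ in Lemma \ref{e32}(3). Combining, $\beta(\Gamma(R)_{SR})=(n-2)+n=2n-2$. The main obstacle I foresee is justifying the upper bound cleanly: one must verify the identification of these two class graphs carefully, since otherwise the delicate case analysis in Steps~1--3 of the proof of Lemma \ref{e32} (which handles the parity of $n$) would need to be transported verbatim to the present setting.
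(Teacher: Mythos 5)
Your proposal is correct and follows essentially the same route as the paper: part (1) is exactly the ``twin in the same $\thicksim$-class'' argument (which the paper dismisses as obvious but uses explicitly in the proof of Lemma \ref{e32ez}), and part (2) uses the same decomposition $\Gamma(R)_{SR}=H^{\prime}+K_{|A(R_1)|}+\cdots+K_{|A(R_n)|}$, additivity of $\beta$, one representative per type/equivalence class, and the identification of the resulting quotient with the reduced-case graph $H$ so that Lemma \ref{e32}(3) gives $\beta(H^{\prime})=n-2$. The identification you were worried about is exactly what the paper records as $H^{\prime}[A]\cong H$ with $|A\cap[I]|=1$ for each class, so no re-run of the parity case analysis is needed.
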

\begin{proof}
{$(1)$ is obvious.

$(2)$
By the proof of Lemma \ref{e32ez},
 $\Gamma(R)_{SR}=H^{\prime}+K_{|A(R_1)|}+K_{|A(R_2)|}+\cdots+K_{|A(R_n)|}$.
It is known that $\beta(K_{|A(R_1)|}+K_{|A(R_2)|}+\cdots+K_{|A(R_n)|})=n$.
Let $$A=\{(I_1,\dots,I_n)\in V(H^{\prime})\,\,|\,\, I_i\in \{0,R_1,\dots,R_n\}\,\,\mathrm{for\,\, every}\,\,1\leq i\leq n\}.$$
Then $H^{\prime}[A]\cong H$, where $H=\Gamma(R)^{**}-n(K_{1})$ (as in part (3) of Lemma \ref{e32e}). Thus
 $|A\cap[I]|=1$,  for every equivalence class $[I]$.
On the other hand, since $H^{\prime}[[I]]$ is complete for every $I\in H^{\prime}$,
 $\beta(H^{\prime})=\beta(\Gamma(R)_{SR}[A])$. Since $\Gamma(R)_{SR}[A]\cong H$ and
$\beta(H)=n-2$, we deduce that  $\beta(H^{\prime})=\beta(\Gamma(R)_{SR}[A])=n-2$.
Therefore, $\beta({\Gamma}(R)_{SR})=\beta(H^{\prime}) +n=2n-2$.
}
\end{proof}

Now, we are ready to state the following result.

\begin{thm}\label{isomorphismq}
 Suppose that $R\cong R_1\times\cdots \times R_n$, where $R_i$ is an Artinian local ring and $|A(R_i)^*|\geq 1$,   for every $1\leq i\leq n$. Then
   $sdim_M(\Gamma(R))=|V(\Gamma(R))|-2n+2$.
\end{thm}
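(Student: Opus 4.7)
The plan is to treat this theorem as a direct consequence of three previously established ingredients: Oellermann's formula $sdim_M(G)=\alpha(G_{SR})$ (Lemma \ref{Oellermann}), Gallai's identity $\alpha(G)+\beta(G)=|V(G)|$ (Lemma \ref{Gallai}), and the structural computation of Lemma \ref{e32z}, which supplies both the vertex set of $\Gamma(R)_{SR}$ and its independence number. The hypothesis $|A(R_i)^*|\geq 1$ for every $i$ together with $R\cong R_1\times\cdots\times R_n$ puts us squarely in the non-reduced Artinian case, so $\Gamma(R)$ is finite and Corollary \ref{dimhelpnonreds} guarantees that $sdim_M(\Gamma(R))$ is indeed finite to begin with.

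The first step is to apply Lemma \ref{Oellermann} to rewrite $sdim_M(\Gamma(R))=\alpha(\Gamma(R)_{SR})$. Next, I would invoke Gallai's Theorem on the graph $\Gamma(R)_{SR}$ to obtain
\[
\alpha(\Gamma(R)_{SR}) \;=\; |V(\Gamma(R)_{SR})| - \beta(\Gamma(R)_{SR}).
\]
The two remaining quantities are handed to us by Lemma \ref{e32z}: part $(1)$ says $\partial(\Gamma(R))=V(\Gamma(R))$, so $|V(\Gamma(R)_{SR})|=|\partial(\Gamma(R))|=|V(\Gamma(R))|$, and part $(2)$ says $\beta(\Gamma(R)_{SR})=2n-2$. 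Substituting these into the Gallai equation yields
\[
sdim_M(\Gamma(R)) \;=\; |V(\Gamma(R))|-(2n-2) \;=\; |V(\Gamma(R))|-2n+2,
\]
which is the asserted formula.

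The main conceptual obstacle is not in this concluding calculation, which is a one-line assembly, but in appreciating why the non-reducedness hypothesis makes the answer so much cleaner than in the reduced case of Theorem \ref{dimprod}. Concretely, the key is that for each factor $R_i$ with $|A(R_i)^*|\geq 1$ there is at least one proper nonzero nilpotent ideal, which forces every vertex $I$ of $\Gamma(R)$ to have a $\sim$-twin $J$ with $N[I]=N[J]$ by Lemma \ref{e32ezs}; this is precisely what drives $\partial(\Gamma(R))=V(\Gamma(R))$ in Lemma \ref{e32z}. Once this is in hand, no maximal ideals need to be subtracted from the vertex count (in contrast to Section 2), and the independence number drops from $n-2$ to $2n-2$ because the twin classes contribute large cliques $K_{|A(R_i)|}$ that carry the additional $n$ independent-set vertices. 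I would therefore include a brief sentence highlighting this contrast, but beyond that the theorem requires only the mechanical substitution outlined above.
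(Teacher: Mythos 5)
Your proposal is correct and follows exactly the paper's own route: the authors likewise deduce the theorem directly from Lemmas \ref{Gallai}, \ref{Oellermann} and \ref{e32z}, with the same substitution $sdim_M(\Gamma(R))=\alpha(\Gamma(R)_{SR})=|V(\Gamma(R)_{SR})|-\beta(\Gamma(R)_{SR})=|V(\Gamma(R))|-(2n-2)$. Your added remarks on the role of the twin classes are accurate context but not needed beyond what the cited lemmas already provide.
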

\begin{proof}
{ The proof follows from Lemmas \ref{Gallai}, \ref{Oellermann} and \ref{e32z}.}
\end{proof}

The following example is related to Theorem \ref{isomorphismq}.

\begin{example}

\end{example}
   Let $R=\mathbb{Z}_4\times  \mathbb{Z}_4\times  \mathbb{Z}_8$.  Then

$[(\mathbb{Z}_4,\mathbb{Z}_4,(0))]=\{(\mathbb{Z}_4,\mathbb{Z}_4,(0)), (\mathbb{Z}_4,\mathbb{Z}_4,(2)),(\mathbb{Z}_4,\mathbb{Z}_4,(4))\}$,

$[(\mathbb{Z}_4,(0),\mathbb{Z}_8)]=\{(\mathbb{Z}_4,(0),\mathbb{Z}_8),(\mathbb{Z}_4,(2),\mathbb{Z}_8)\}$,

$[((0),\mathbb{Z}_4,\mathbb{Z}_8)]=\{((0),\mathbb{Z}_4,\mathbb{Z}_8),((2),\mathbb{Z}_4,\mathbb{Z}_8)\}$,

$[((0),(0),\mathbb{Z}_8)]=\{((0),(0),\mathbb{Z}_8),((2),(0),\mathbb{Z}_8), ((0),(2),\mathbb{Z}_8),((2),(2),\mathbb{Z}_8)\}$,

$[(\mathbb{Z}_4,(0),(0))]=\{(\mathbb{Z}_4,(0),(0)),(\mathbb{Z}_4,(0),(2)),(\mathbb{Z}_4,(0),(4)),(\mathbb{Z}_4,(2),(0)),
(\mathbb{Z}_4,(2),(2)),
(\mathbb{Z}_4,(2),(4))\}$,

$[((0),\mathbb{Z}_4,(0))]=\{((0),\mathbb{Z}_4,(0)),((0),\mathbb{Z}_4,(2)),((0),\mathbb{Z}_4,(4)),((2),\mathbb{Z}_4,(0)),
((2),\mathbb{Z}_4,(2)),
((2),\mathbb{Z}_4,(4))\}$ and

$A=\{(\mathbb{Z}_4,\mathbb{Z}_4,(0)), (\mathbb{Z}_4,(0),\mathbb{Z}_8), ((0),\mathbb{Z}_4,\mathbb{Z}_8),
((0),(0),\mathbb{Z}_8),(\mathbb{Z}_4,(0),(0)),((0),\mathbb{Z}_4,(0))\}$.

Let $J,K\in [I]$. Then $N[J]=N[K]$ and so by Lemma \ref{dimhelpnonred}, $J\in W$ or  $K\in W$, where $W$ is a strong resolving set of  $\Gamma(R)$. Thus one may assume that $V(\Gamma(R))\setminus A\subseteq W$.
Let

$V_{1}=\{((0),\mathbb{Z}_4,(0)), (\mathbb{Z}_4,(0), (0)),((0),(0),\mathbb{Z}_8)\}$ and

$V_{2}=\{(\mathbb{Z}_4,(0),\mathbb{Z}_8),((0),\mathbb{Z}_4,\mathbb{Z}_8), (\mathbb{Z}_4,\mathbb{Z}_4,(0))\}$.

Since  $d(I,J)=daim(\Gamma(R))$, for every  $I,J\in V_1$, we conclude that $I\in W$ or  $J\in W$. Thus one may let
$V(\Gamma(R))\setminus V_3\subseteq W$, where
$V_3=V_2\cup \{((0),(0),\mathbb{Z}_8)\}$.

 Since for every  $I,J\in V_3$ we have $N[I]\neq N[J]$, $I,J$   are strongly resolved by  some  vertices of $W=A(R)^*\setminus(V_3)$. This means that $sdim_M(\Gamma)=|W|=23-4=19$.
On the other hand, by Theorem \ref{isomorphismq} it is easily seen that   $sdim_M({\Gamma}(R))=23-6+2=19$.

We close this paper with the following result which completely  characterizes $sdim_M({\Gamma}(R))$, when $R$ is non-reduced.

\begin{cor}\label{isomorphismqwez}
 Let  $R\cong R_1\times\cdots \times R_n\times F_1\times \cdots \times F_m$, be a  ring,  $n\geq 1$, $m\geq1$ where $R_i$ is an Artinian local ring such that  for every $1\leq i\leq n$, $|A(R_i)^*|\geq 1$ and each $F_i$ is a field.  Then
   $sdim_M(\Gamma(R))=|V(\Gamma(R))|-2n-2m+2$.
\end{cor}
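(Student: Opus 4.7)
The plan is to mirror the strategy of Theorem \ref{isomorphismq} while carefully accounting for the $m$ field factors. Lemmas \ref{Oellermann} and \ref{Gallai} reduce the problem to the identity
$sdim_M(\Gamma(R)) = |V(\Gamma(R)_{SR})| - \beta(\Gamma(R)_{SR})$,
so everything comes down to computing the order and independence number of the strong resolving graph. To each vertex $I = (I_1, \ldots, I_{n+m})$ I assign its nil-pattern
$N_I = \{\, i : I_i \subseteq Nil(R_i)\,\} \subsetneq \{1, \ldots, n+m\}$,
with the convention $Nil(F_j) = 0$. The equivalence classes of $\sim$ correspond bijectively to the non-empty proper $N$-patterns, and the class of $N$ has size $\prod_{i \in N \cap \{1,\ldots,n\}} |A(R_i)|$; in particular $[I]$ is a singleton precisely when $N_I \subseteq \{n+1, \ldots, n+m\}$.

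Following Lemma \ref{e32ez}, I define $\Gamma(R)^{\circ}$ on $V(\Gamma(R))$ by joining $I, J$ exactly when $[I]=[J]$ or when $I^{\prime}+J^{\prime}\neq R$ with $I^{\prime}\nsubseteq J^{\prime}$ and $J^{\prime}\nsubseteq I^{\prime}$, and a short computation in nil-patterns shows that for distinct classes the second condition is equivalent to $N_I \cap N_J \neq \varnothing$ together with $N_I, N_J$ being incomparable under inclusion. Case analysis then yields
\[
\Gamma(R)^{\circ} \;=\; H^{\flat} \,+\, K_{|A(R_1)|} \,+\, \cdots \,+\, K_{|A(R_n)|} \,+\, m\cdot K_1,
\]
where each $K_{|A(R_i)|}$ consists of the class with $N=\{i\}$, $i\le n$, the $m$ isolated $K_1$'s are the field-associated maximal ideals $M_{n+j}=(R_1,\ldots,R_n,F_1,\ldots,0_{(j)},\ldots,F_m)$ (each pattern $\{n+j\}$ cannot be incomparable with any $N_J$ meeting it), and $H^{\flat}$ is the union of all classes with $|N_I|\ge 2$ (connected when $n+m\ge 3$, empty when $n=m=1$). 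The next step is to identify $\Gamma(R)_{SR}$ with $\Gamma(R)^{\circ}-m\cdot K_1$. For every non-singleton class Lemma \ref{e32ezs} places it in $\partial(\Gamma(R))$; for a singleton class sitting in $H^{\flat}$ (necessarily with $N_I\subseteq\{n+1,\ldots,n+m\}$ and $|N_I|\ge 2$) one exhibits a mutually maximally distant partner by swapping a field coordinate of $N_I$ for a ring coordinate, preserving a non-empty intersection while breaking comparability. Conversely, each $M_{n+j}$ is shown to be excluded from $\partial(\Gamma(R))$ by the slick observation that $u=(0,\ldots,0,F_j,0,\ldots,0)$ is a pendant vertex of $M_{n+j}$ in $\Gamma(R)$; hence $d(J,u)=d(J,M_{n+j})+1$ for any $J\neq u,M_{n+j}$, forcing $M_{n+j}$ to fail maximal distance from every vertex. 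The two-way argument of Lemma \ref{e32ez}(2) then matches adjacency in $\Gamma(R)^{\circ}-m\cdot K_1$ with mutual maximal distance in $\Gamma(R)$.

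Finally, each $K_{|A(R_i)|}$ contributes $1$ to $\beta$ (so $n$ in total), and within $H^{\flat}$ each equivalence class is a clique of twin vertices, giving $\beta(H^{\flat})=\beta(H^{\flat}[A])$ for any transversal $A$. The graph $H^{\flat}[A]$ has vertex set $\{N\subsetneq\{1,\ldots,n+m\}:|N|\ge 2\}$ with adjacency ``nonempty intersection and incomparability''---exactly the combinatorial structure of the strong resolving graph of a product of $n+m$ fields---so Lemma \ref{e32}(3), applied with $n$ replaced by $n+m$, yields $\beta(H^{\flat}[A])=n+m-2$. Assembling, $\beta(\Gamma(R)_{SR})=n+(n+m-2)=2n+m-2$ and $|V(\Gamma(R)_{SR})|=|V(\Gamma(R))|-m$, so
$sdim_M(\Gamma(R))=|V(\Gamma(R))|-m-(2n+m-2)=|V(\Gamma(R))|-2n-2m+2$, as required. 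The main obstacle is the boundary analysis, because neither Lemma \ref{e32e}(4) nor Lemma \ref{e32ez}(2) applies directly to the mixed product: the field-maximal ideals must be excluded from $\partial$ via the pendant argument above, while the non-trivial singleton classes in $H^{\flat}$ must be admitted to $\partial$ via the explicit swap construction.
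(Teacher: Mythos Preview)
Your proposal is correct and follows the same overall route as the paper: decompose $\Gamma(R)_{SR}$ as $H''+K_{|A(R_1)|}+\cdots+K_{|A(R_n)|}$, show $|V(\Gamma(R)_{SR})|=|V(\Gamma(R))|-m$, compute $\beta(H'')=n+m-2$ via a transversal reducing to Lemma~\ref{e32}(3), and conclude by Gallai and Lemma~\ref{Oellermann}. The paper's own proof is a two-line sketch (``a refinement of the arguments in proofs of Theorem~\ref{isomorphismq} and Lemma~\ref{e32e}''), so what you have written is essentially the content that sketch suppresses; in particular your pendant-vertex exclusion of the $m$ field-maximal ideals from $\partial(\Gamma(R))$ and your coordinate-swap inclusion of the singleton classes in $H^{\flat}$ are exactly the mixed-case details the paper leaves implicit.
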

\begin{proof}
{The proof here is a refinement of the arguments in proofs of Theorem \ref{isomorphismq} and Lemma \ref{e32e}.
By similar proofs, one may get
 $\Gamma(R)_{SR}=H^{\prime \prime}+K_{|A(R_1)|}+K_{|A(R_2)|}+\cdots+K_{|A(R_n)|}$,
$\beta(H^{\prime \prime})=n+m-2$
and
 $\beta(K_{|A(R_1)|}+K_{|A(R_2)|}+\cdots+K_{|A(R_n)|})=n$.
Thus   $\beta({\Gamma}(R)_{SR})=2n+m-2$.
On the other hand, since
 $|V(\Gamma(R)_{SR})|=|V(\Gamma(R))|-m$,
we deduce that    $sdim_M(\Gamma(R))=|V(\Gamma(R))|-m-(2n+m-2)=|V(\Gamma(R))|-2n-2m+2$.
 }
\end{proof}

\noindent{\bf Acknowledgements.} Reza Nikandish in this work has been financially supported by the research of Jundi-Shapur Research Institute. The grant number was 01-100-1-1400.

{}

\end{document}